\numberwithin{equation}{section}
\newtheorem{theorem}{Theorem}
\newtheorem{corollary}[theorem]{Corollary}
\newtheorem{proposition}[theorem]{Proposition}
\newtheorem{claim}[theorem]{Claim}
\theoremstyle{definition}
\newtheorem{remark}[theorem]{Remark}
\newtheorem{example}[theorem]{Example}
\newcommand{\cupp}{\mathbin{\smallsmile}}
\newcommand{\Z}{\mathbb{Z}}
\newcommand{\C}{\mathbb{C}}
\DeclareMathOperator{\GL}{GL}
\DeclareMathOperator{\SO}{SO}
\DeclareMathOperator{\Sp}{Sp}
\begin{document}

\title{Deformed Cohomology of Flag Varieties}
\author{Oliver Pechenik}
\author{Dominic Searles}
\address{Department of Mathematics, University of Illinois at Urbana-Champaign, Urbana, IL 61802}
\email{pecheni2@illinois.edu, searles2@illinois.edu} 
\date{\today}

\begin{abstract}
This paper introduces a two-parameter deformation of the cohomology of generalized flag varieties. One special case is the Belkale-Kumar deformation (used to study eigencones of Lie groups). Another picks out intersections of Schubert varieties that behave nicely under projections. Our construction yields a new proof that the Belkale-Kumar product is well-defined. This proof is shorter and more elementary than earlier proofs.
\end{abstract}

\maketitle 

\section{Introduction}
In 2006, P.~Belkale and S.~Kumar \cite{belkale.kumar} introduced a new product structure on the cohomology of generalized flag varieties. 
They used this deformed product to obtain a maximally efficient solution to the Horn problem in general Lie type (generalizing the famous Horn problem on eigenvalues of sums of Hermitian matrices). The irredundancy of this solution was proved in 2010 by N.~Ressayre \cite{ressayre}.
More recently this product has been used to further study eigencones of compact connected Lie groups \cite{BelkaleKumar10, Ressayre} and the representation theory of (semisimple parts of) Levi subgroups \cite{BelkaleKumarRessayre}. 

This paper introduces a more general product that has the Belkale-Kumar product as a specialization. Another specialization identifies intersections of Schubert varieties with nice projection properties. From our general construction, we obtain a new and significantly easier proof that the Belkale-Kumar product is well-defined. 

Let $G$ be a complex connected reductive Lie group. Choose Borel and opposite Borel subgroups $B, B_-$ and maximal torus $T = B \cap B_-$. Let $W$ denote the Weyl group $N_G(T) / T$. For $w\in W$, we denote the Coxeter length of $w$ by $l(w)$. 
Fix a parabolic subgroup $B \subseteq P\subset G$. 
Let $W_P$ denote the associated parabolic subgroup of $W$, and $W^P$ denote the set of minimal length coset representatives of $W/W_P$. For $w \in W^P$, the \emph{Schubert variety} $X_w=\overline{B_-wP/P} \subseteq G/P$ has codimension $l(w)$. The Poincar\'e duals $\{ \sigma_w \}$ of the Schubert varieties form an additive basis of the cohomology ring $H^\star(G/P)$. That is, 
\[ \sigma_u \cupp \sigma_v = \sum_w c_{u,v}^w \sigma_w,\] where $c_{u,v}^w \in \Z_{\geq 0}$ is a \emph{Schubert structure constant}. (In the case $G=\GL_n(\C)$ and $P$ is maximal, these structure constants are the Littlewood-Richardson coefficients). Let $w^\vee=w_0ww_0^P$, where $w_0, w_0^P$ are the longest elements of $W, W_P$, respectively. The number $c_{u,v}^w$ is nonzero exactly when generic translates of $X_u, X_v,$ $X_{w^\vee}$ intersect in a finite nonzero number of points; in that case, $c_{u,v}^w$ counts the number of such points. 

For each simple root $\alpha$ associated to $P$ (so that $\alpha$ is inverted by some element of $W^P$), we introduce a complex variable $t_\alpha$ and a positive real variable $s_\alpha$. For a positive root $\beta$, let $n_{\alpha\beta}$ denote the multiplicity of $\alpha$ in the simple root expansion of $\beta$ and define \[t^\beta = \prod_\alpha t_\alpha^{{n_{\alpha\beta}^{s_\alpha}}},\] where the product is over the simple roots associated to $P$. Then define $F_w(t,s)$ to be the product of the $t^\beta$ over all positive roots $\beta$ that are inverted by $w$. We define a product on $H^\star(G/P)$ by
\begin{equation}\label{eq:product_def}
 \sigma_u \star_{t,s} \sigma_v = \sum_w \frac{F_w(t,s)}{F_u(t,s)F_v(t,s)} c_{u,v}^w \sigma_w.
\end{equation}
We recover the Belkale-Kumar product $\odot_t$ as the specialization $\star_{t,1}$. (This is immediate from the description of $\odot_t$ in \cite{evens.graham}.) Most interest has been in the further specialization $\odot_0 = \star_{0,1}$ given by evaluating each $t_\alpha$ to $0$.

\begin{theorem}\label{thm:main}
The product $\star_{t,s}$ is a well-defined commutative associative product. In particular, $F_u(t,s)F_v(t,s)$ divides $F_w(t,s)$ whenever the Schubert structure constant $c_{u,v}^w$ is nonzero.
\end{theorem}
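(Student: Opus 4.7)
The strategy is to reduce the divisibility claim $F_u(t,s)F_v(t,s) \mid F_w(t,s)$, which is the substantive content of the theorem, to a multiset dominance condition on inversion sets, then construct the needed combinatorial matching, and finally derive commutativity and associativity formally from the corresponding properties of $\cupp$.

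Since $F_w(t,s) = \prod_\alpha t_\alpha^{E_\alpha(w)}$ is a generalized monomial with exponent $E_\alpha(w) := \sum_{\beta \in \mathrm{Inv}(w)} n_{\alpha\beta}^{\,s_\alpha}$ (the product ranging over simple roots $\alpha$ associated to $P$, and $\mathrm{Inv}(w)$ denoting the set of positive roots inverted by $w$), the divisibility is equivalent to the numerical inequality $E_\alpha(u) + E_\alpha(v) \leq E_\alpha(w)$ holding for every such $\alpha$ and every $s_\alpha > 0$. Using the identity $a^s = \sum_{k \geq 1}(k^s - (k-1)^s)\mathbf{1}_{\{a \geq k\}}$ for non-negative integers $a$, together with positivity of $k^s - (k-1)^s$ for $s>0$, one verifies that the whole family of inequalities (over all $s_\alpha > 0$) is equivalent to the pointwise multiset dominance
\[ \#\{\beta \in \mathrm{Inv}(u) \sqcup \mathrm{Inv}(v) : n_{\alpha\beta} \geq k\} \;\leq\; \#\{\beta \in \mathrm{Inv}(w) : n_{\alpha\beta} \geq k\} \quad \text{for every } k \geq 1, \]
which in turn is equivalent to the existence of an injection $\phi_\alpha : \mathrm{Inv}(u) \sqcup \mathrm{Inv}(v) \hookrightarrow \mathrm{Inv}(w)$ with $n_{\alpha,\phi_\alpha(\beta)} \geq n_{\alpha,\beta}$ for every $\beta$ in the domain.

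The main obstacle is to produce such injections when $c_{u,v}^w \neq 0$. My plan is to build a single bijection $\phi : \mathrm{Inv}(u) \sqcup \mathrm{Inv}(v) \to \mathrm{Inv}(w)$ such that $\phi(\beta) - \beta$ is always a non-negative integer combination of simple roots; then restricting yields every $\phi_\alpha$ simultaneously. Bijectivity is automatic, since $c_{u,v}^w \neq 0$ in $H^\star(G/P)$ forces $l(u) + l(v) = l(w)$ by degree comparison. I would construct $\phi$ geometrically: choose generic $g \in G$ so that $X_u \cap g X_v \cap X_{w^\vee}$ contains a point $p$ (nonemptyness being provided by $c_{u,v}^w \neq 0$), and analyze the root-space decompositions of the three tangent spaces at $p$ inside $T_p(G/P)$; transversality of the intersection should force the inversion roots of $u$ and of $v$ to match against those of $w$ in a way that respects the root order.

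With the divisibility established, commutativity of $\star_{t,s}$ is immediate from commutativity of $\cupp$ together with the symmetry $F_u F_v = F_v F_u$. For associativity, direct expansion yields
\[ (\sigma_u \star_{t,s} \sigma_v) \star_{t,s} \sigma_z = \sum_r \frac{F_r(t,s)}{F_u(t,s) F_v(t,s) F_z(t,s)} \Bigl(\sum_y c_{u,v}^y c_{y,z}^r\Bigr) \sigma_r, \]
and an analogous computation for $\sigma_u \star_{t,s} (\sigma_v \star_{t,s} \sigma_z)$ gives the same expression but with inner sum $\sum_y c_{u,y}^r c_{v,z}^y$; these two agree by associativity of the ordinary cup product.
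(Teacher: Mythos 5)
Your formal treatment of commutativity and associativity is correct and is the same computation the paper gives. Your reduction of the divisibility claim to the family of inequalities
$\#\{\beta \in \mathrm{Inv}(u)\sqcup\mathrm{Inv}(v) : n_{\alpha\beta}\geq k\} \leq \#\{\beta \in \mathrm{Inv}(w) : n_{\alpha\beta}\geq k\}$
is also correct (only the easy direction of your claimed equivalence is needed, and that direction is trivial since each $k^{s}-(k-1)^{s}>0$), and in fact this is precisely what the paper proves: their inequality $|w_1|_{\mathcal{J}_{\alpha,k}}+|w_2|_{\mathcal{J}_{\alpha,k}}\leq|w_3^\vee|_{\mathcal{J}_{\alpha,k}}$, where $\mathcal{J}_{\alpha,k}=\{\beta\in\Phi^+:n_{\alpha\beta}\geq k\}$, is your multiset dominance stated for the filter $\mathcal{J}_{\alpha,k}$. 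Your passage from dominance to an injection $\phi_\alpha$ via Hall's theorem is a valid but unnecessary extra step, since the dominance statement already gives the needed exponent inequality directly.

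The genuine gap is the construction that carries all the weight. You write that you ``would construct $\phi$ geometrically'' by analyzing root-space decompositions of the three tangent spaces at a point of $X_u\cap gX_v\cap X_{w^\vee}$ and that ``transversality of the intersection should force the inversion roots of $u$ and of $v$ to match against those of $w$ in a way that respects the root order.'' This is not an argument; it is a restatement of the conclusion as a hope. Transversality gives you a direct-sum decomposition of the tangent space, but there is no reason that decomposition is adapted to the root-space grading, so it does not by itself produce any root-by-root matching, let alone an order-respecting one. The actual mechanism the paper uses is: reduce to $G/B$ (noting $c_{w_1,w_2}^{w_3^\vee}(G/P)=c_{w_1,w_2}^{w_3^\vee}(G/B)$), translate the transversal intersection to a decomposition $T_{eB}(G/B)=\bigoplus_i b_i\cdot I_i$ at the identity coset for generic $b_i\in B$, and then exploit the crucial fact that for a filter $\mathcal{J}\subseteq\Phi^+$ the subspace $J=\bigoplus_{\beta\in\mathcal{J}}\mathfrak{g}_\beta$ is $B$-stable (so $b_i\cdot J=J$), which makes the quotient by $J$ compatible with the $b_i$ and allows a clean dimension count yielding $\sum_i\dim(I_i\cap J)\leq\dim J$. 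The $B$-stability of $J$ is the idea your sketch is missing, and without it (or some genuinely different mechanism) the proof does not go through. As a side remark, your stronger goal of a single bijection $\phi$ with $\phi(\beta)\geq\beta$ would follow from dominance over \emph{all} filters $\mathcal{J}$ (which the paper's dimension count actually establishes) by Hall's theorem, but it is more than the theorem requires and should not be the thing you try to produce directly.
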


\begin{corollary}[\cite{belkale.kumar,evens.graham}]\label{cor:bk}
The Belkale-Kumar product is well-defined.
\end{corollary}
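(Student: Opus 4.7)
Commutativity is immediate from commutativity of $\cupp$ and the symmetry of the rescaling factor $F_w/(F_uF_v)$ in $u$ and $v$. Associativity will reduce to the divisibility claim: once $F_uF_v$ divides $F_w$ whenever $c_{u,v}^w\ne 0$, unwinding the definitions shows that the coefficient of $\sigma_w$ in $(\sigma_u\star_{t,s}\sigma_v)\star_{t,s}\sigma_x$ is
\[
\sum_y \frac{F_y}{F_uF_v}c_{u,v}^y\cdot \frac{F_w}{F_yF_x}c_{y,x}^w \;=\; \frac{F_w}{F_uF_vF_x}\sum_y c_{u,v}^y\,c_{y,x}^w,
\]
and the coefficient in $\sigma_u\star_{t,s}(\sigma_v\star_{t,s}\sigma_x)$ is the analogous expression with $\sum_z c_{u,z}^w c_{v,x}^z$. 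Both inner sums equal the triple cup-product multiplicity $[\sigma_u\cupp\sigma_v\cupp\sigma_x:\sigma_w]$, so associativity of $\cupp$ gives associativity of $\star_{t,s}$. The theorem thereby collapses to proving divisibility.

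\textbf{Reducing divisibility to an inequality family.} Since $F_w=\prod_\alpha t_\alpha^{E_\alpha(w)}$ is a single monomial with real exponents $E_\alpha(w)=\sum_{\beta\in\mathrm{inv}(w)} n_{\alpha\beta}^{s_\alpha}$, the divisibility $F_uF_v\mid F_w$ (when $c_{u,v}^w\ne 0$) is equivalent to the family of inequalities
\[
\sum_{\beta\in\mathrm{inv}(u)} n_{\alpha\beta}^{s_\alpha}+\sum_{\beta\in\mathrm{inv}(v)} n_{\alpha\beta}^{s_\alpha}\;\le\;\sum_{\beta\in\mathrm{inv}(w)} n_{\alpha\beta}^{s_\alpha},
\]
indexed by simple roots $\alpha$ associated to $P$ and positive reals $s_\alpha$. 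The specialization $s_\alpha=1$ is exactly the known Belkale-Kumar inequality; what is new is uniformity in $s_\alpha$.

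\textbf{Where the work lies.} Establishing this inequality family is the core task. A clean sufficient condition is \emph{level-wise dominance}: for each simple root $\alpha$ associated to $P$ and each integer $k\ge 1$,
\[
\#\{\beta\in\mathrm{inv}(u):n_{\alpha\beta}\ge k\}+\#\{\beta\in\mathrm{inv}(v):n_{\alpha\beta}\ge k\}\;\le\;\#\{\beta\in\mathrm{inv}(w):n_{\alpha\beta}\ge k\}.
\]
Abel summation against the positive factor $k^{s_\alpha}-(k-1)^{s_\alpha}$ then converts dominance into the required inequality family for all $s_\alpha>0$. My plan is to try to prove dominance by refining the BK geometric argument: since $c_{u,v}^w\ne 0$, Kleiman transversality makes $X_u\cap gX_v\cap hX_{w^\vee}$ a nonempty finite set for generic $g,h\in G$, and at each intersection point the three tangent spaces decompose $T(G/P)$ $T$-equivariantly with weight sets labeled by $\mathrm{inv}(u),\mathrm{inv}(v),\mathrm{inv}(w^\vee)$. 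The BK argument at $s_\alpha=1$ extracts the sum inequality by counting $\alpha$-coefficients across the three pieces; the additional step here is to show that this decomposition restricts well to the sublevels $\{n_{\alpha\bullet}\ge k\}$, upgrading sum estimates to count estimates at every level.

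\textbf{Main obstacle and fallback.} The level-wise stability of the transversality decomposition is the main technical hurdle — roots of fixed $\alpha$-height need not generate a Lie-theoretically closed subspace, so direct-summing on sublevels requires care. If dominance proves elusive (for instance because the sufficient condition is strictly stronger than the inequality family it implies), a fallback is to prove the inequality family directly for each $s_\alpha$ by induction on $l(w)$, using Chevalley's formula to split a simple reflection off $v$ and passing to a shorter pair $v'=sv$ via the inductive hypothesis.
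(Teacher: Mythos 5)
Your reductions are exactly the paper's: associativity is the formal telescoping computation you give, and well-definedness reduces to the level-wise dominance
\[
\#\{\beta\in\mathrm{inv}(u):n_{\alpha\beta}\ge k\}+\#\{\beta\in\mathrm{inv}(v):n_{\alpha\beta}\ge k\}\;\le\;\#\{\beta\in\mathrm{inv}(w):n_{\alpha\beta}\ge k\}
\]
whenever $c_{u,v}^w\ne 0$, which yields the exponent inequalities for all $s_\alpha>0$ by Abel summation against $k^{s_\alpha}-(k-1)^{s_\alpha}>0$. This is precisely the inequality $|w_1|_{\mathcal{J}_{\alpha,k}}+|w_2|_{\mathcal{J}_{\alpha,k}}\le|w_3^\vee|_{\mathcal{J}_{\alpha,k}}$ that the paper proves. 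But your proposal stops at the one step that carries all the content, and the obstacle you flag there is a genuine gap in your argument --- though it dissolves once you identify the right structure.

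The resolution is that $\mathcal{J}_{\alpha,k}=\{\beta\in\Phi^+:n_{\alpha\beta}\ge k\}$ is a \emph{filter} in the root poset (adding a positive root to $\beta$ can only increase $n_{\alpha\beta}$), so $J=\bigoplus_{\beta\in\mathcal{J}_{\alpha,k}}\mathfrak{g}_\beta$ is an ideal of $\mathfrak{n}$ and hence $B$-stable; you never need fixed-height slices to be Lie-closed, only these upward-closed sets. Relatedly, your description of the transversal decomposition as $T$-equivariant with weight sets $\mathrm{inv}(u)$, $\mathrm{inv}(v)$, $\mathrm{inv}(w^\vee)$ is not correct and is the source of your difficulty: after passing from $G/P$ to $G/B$ and translating a generic intersection point to $eB$, the three complementary subspaces are $b_i\cdot I_i$ for \emph{generic} $b_i\in B$, where $I_i$ is a sum of root spaces but $b_i\cdot I_i$ is not. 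The point is that this does not matter: since $b_i\cdot J=J$, one gets $\dim(b_i\cdot I_i\cap J)=\dim(I_i\cap J)$, which equals the number of inversions of $w_i$ lying in $\mathcal{J}_{\alpha,k}$. Hence the image of $b_i\cdot I_i$ in $T_{eB}(G/B)/J$ has dimension $\dim I_i$ minus that count, and if dominance failed the three images could not span the quotient, contradicting the transversality supplied by Kleiman's theorem. That short dimension count in the quotient is the entire remaining proof; neither a level-wise refinement of the decomposition itself nor your fallback induction via Chevalley's formula is needed.
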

\begin{proof}
This follows from Theorem~\ref{thm:main} by $\odot_t = \star_{t,1}$.
\end{proof}

Corollary~\ref{cor:bk} was proved by P.~Belkale--S.~Kumar \cite{belkale.kumar} using geometric invariant theory (specifically a Hilbert-Mumford criterion for semistability) and by S.~Evens--W.~Graham \cite{evens.graham} using relative Lie algebra cohomology. In contrast, our proof of Theorem~\ref{thm:main} (and hence of Corollary~\ref{cor:bk}) uses only straightforward analysis of the tangent spaces to Schubert varieties.

This paper is structured as follows. In Section~\ref{sec:main_proof}, we prove Theorem~\ref{thm:main}.  In Section~\ref{sec:star_product} we study the limit of $\star_{t,s}$ as $s \to 0$, and describe its geometric significance. As a corollary, we obtain an independent and completely elementary proof of Corollary~\ref{cor:bk} in the case $G = \GL_n(\C)$. 

\section{Proof of Theorem~\ref{thm:main}}
\label{sec:main_proof}
Commutativity is clear.
For associativity, observe that
\begin{align*}
(\sigma_u \star_{t,s} \sigma_v) \star_{t,s} \sigma_w &= \sigma_w \star_{t,s} \sum_x \frac{F_x}{F_uF_v} c_{u,v}^x \sigma_x \\
&= \sum_{x,y} \frac{F_x}{F_uF_v} \frac{F_y}{F_wF_x} c_{u,v}^x c_{w,x}^y \sigma_y \\
&= \sum_{x,y} \frac{F_y}{F_uF_v F_w} c_{u,v}^x c_{w,x}^y \sigma_y,
\end{align*} while similarly 
\[
\sigma_u \star_{t,s} (\sigma_v \star_{t,s} \sigma_w) = \sum_{x,y} \frac{F_y}{F_uF_v F_w} c_{v,w}^x c_{u,x}^y \sigma_y.
\] 
Associativity then follows immediately from that of the ordinary cup product.

We now prove $\star_{t,s}$ is well-defined. Let $w_1, w_2, w_3 \in W^P$. Then  $c_{w_1, w_2}^{w_3^\vee}(G/P) = c_{w_1, w_2}^{w_3^\vee}(G/B)$. In particular, since $c_{w_1, w_2}^{w_3^\vee}(G/P) \neq 0$ implies $c_{w_1, w_2}^{w_3^\vee}(G/B) \neq 0$, it suffices to assume that $c_{w_1, w_2}^{w_3^\vee}(G/B) \neq 0$ and to show that $F_{w_1}F_{w_2}$ divides $F_{w_3^\vee}$.

Most of the facts described below are well-known. We learned some of these ideas from \cite{richmond:thesis}, where they appear with further details. Our proof is heavily indebted to \cite[Theorem~1]{Purbhoo}, in particular for the key idea that filters give rise to $B$-stable subspaces.

\begin{claim}\label{claim:tangent_stuff}
For generic $b_i \in B$ \begin{equation*}\label{eq}
T_{eB}(G/B)=\bigoplus_{i=1}^3 b_i \cdot \frac{T_{eB}(G/B)}{T_{eB}(w_i^{-1}X_{w_i})}.
\end{equation*}
\end{claim}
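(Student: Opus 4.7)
The equation in the claim is equivalent, by a dimension count and general linear algebra, to the statement that for generic $(b_1,b_2,b_3)\in B^{3}$ the natural map
\[
T_{eB}(G/B) \longrightarrow \bigoplus_{i=1}^{3} \frac{T_{eB}(G/B)}{T_{eB}(b_i w_i^{-1}X_{w_i})}
\]
is an isomorphism, where I use that $b_i\cdot T_{eB}(w_i^{-1}X_{w_i}) = T_{eB}(b_i w_i^{-1}X_{w_i})$ since $b_i\in B$ fixes $eB$. Dimensions agree because each quotient has dimension $\ell(w_i)=\mathrm{codim}\,X_{w_i}$, and the hypothesis $c_{w_1,w_2}^{w_3^{\vee}}(G/B)\neq 0$ forces $\ell(w_1)+\ell(w_2)+\ell(w_3)=\ell(w_0)=\dim(G/B)$. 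So the claim reduces to showing that for generic $(b_i)$,
\[
\bigcap_{i=1}^{3} T_{eB}(b_i w_i^{-1}X_{w_i})=0,
\]
i.e.\ the infinitesimal transversality of $\bigcap_i b_iw_i^{-1}X_{w_i}$ at $eB$.

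I would deduce this by combining Kleiman's transversality theorem with a translation trick. Kleiman, applied to $G$ acting transitively on $G/B$, produces a dense open set of $(g_1,g_2,g_3)\in G^{3}$ for which $\bigcap_i g_i X_{w_i}$ is proper and transverse at each of its (smooth) points. Nonvanishing of $c_{w_1,w_2}^{w_3^{\vee}}(G/B)$ guarantees that this intersection is nonempty, so I fix a transverse intersection point $hB$. Translation by $h^{-1}$ preserves transversality, giving that $\bigcap_i (h^{-1}g_i)X_{w_i}$ is transverse at $eB$. Because $eB$ is a smooth point of each $(h^{-1}g_i)X_{w_i}$, it lies in the corresponding open Schubert cell, so $(h^{-1}g_i)^{-1}=b_i^{-}\, w_i\, \beta_i$ for some $b_i^{-}\in B_-$ and $\beta_i\in B$. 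Using the $B_-$-stability of $X_{w_i}$, this yields
\[
(h^{-1}g_i)X_{w_i}=\beta_i^{-1}w_i^{-1}(b_i^{-})^{-1}X_{w_i}=\beta_i^{-1}w_i^{-1}X_{w_i},
\]
so setting $b_i:=\beta_i^{-1}\in B$ exhibits one tuple in $B^{3}$ satisfying the decomposition.

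To conclude for generic $(b_i)\in B^{3}$, observe that $T_{eB}(bw^{-1}X_w)$ depends algebraically on $b\in B$, so $\dim\bigcap_{i}T_{eB}(b_i w_i^{-1}X_{w_i})$ is upper semicontinuous on $B^{3}$ and the locus where it vanishes is Zariski open. The previous paragraph produces a point of this locus; since $B^{3}$ is irreducible, the locus is dense open, proving the claim.

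\textbf{Anticipated difficulty.} The delicate step is the Bruhat-decomposition maneuver converting a generic $G$-translate to a $B$-translate: one must verify that a generic transverse intersection point $hB$ really does lie in the open cell of each $g_iX_{w_i}$, so that $(h^{-1}g_i)^{-1}\in B_- w_i B$ and the factorization is legitimate. Once this is in hand, the $B_-$-stability of $X_{w_i}$ absorbs the $B_-$-factor cleanly. This piece — transversality forces the intersection into the smooth locus, which coincides with the open cell — is the pivot that makes the reduction to generic $b_i\in B$ possible.
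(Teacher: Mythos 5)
Your proof follows the same overall route as the paper's (which is extremely terse and compresses the translation/Bruhat/semicontinuity argument into a single ``Therefore''): apply Kleiman in $G^3$, translate a transverse intersection point to $eB$, factor via the Bruhat decomposition to land in $B^3$, and then use irreducibility of $B^3$ together with openness of the transversality condition. Your filling-in of the details is welcome and mostly right, but the step you yourself flag as the pivot contains a genuine error.

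You justify the inclusion of $eB$ in the open Schubert cell of each $(h^{-1}g_i)X_{w_i}$ by asserting that the smooth locus of a Schubert variety coincides with the open cell. This is false. Plenty of Schubert varieties are smooth off the open cell, and some are smooth everywhere (for instance $X_e = G/B$, or more generally any smooth Schubert variety such as a sub-Grassmannian); in those cases the smooth locus is all of $X_w$, while the open cell is proper. So transversality at $hB$ — which only tells you $hB$ is a smooth point — does not by itself place $hB$ in the open cell, and the Bruhat factorization $(h^{-1}g_i)^{-1}\in B_-w_iB$ does not follow from smoothness.

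The correct way to get this conclusion is another application of Kleiman, this time to the boundary $\partial X_{w_i} = X_{w_i}\setminus X_{w_i}^{\circ}$, which has codimension $l(w_i)+1$. For generic $(g_1,g_2,g_3)$, the intersection $g_1\,\partial X_{w_1}\cap g_2 X_{w_2}\cap g_3 X_{w_3}$ has negative expected dimension (the codimensions sum to $\dim G/B + 1$), hence is empty; likewise with each of the other two boundaries in place of the first. Intersecting these generic loci, a generic triple $(g_1,g_2,g_3)$ has $\bigcap_i g_iX_{w_i}$ nonempty, transverse, and contained in $\bigcap_i g_iX_{w_i}^{\circ}$. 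With that replacement, the rest of your argument — translation to $eB$, the factorization $(h^{-1}g_i)^{-1}=b_i^- w_i\beta_i$, absorption of the $B_-$-factor by $B_-$-stability of $X_{w_i}$, and the semicontinuity/irreducibility argument to pass from one good triple to a dense open set in $B^3$ — goes through exactly as you wrote it.
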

\begin{proof}
By Kleiman transversality \cite{kleiman}, $c_{w_1, w_2}^{w_3^\vee}(G/B)\neq 0$ implies that the intersection $\bigcap_{i=1}^3 g_iX_{w_i}$ is transverse and nonempty for generic $g_i\in G$. Therefore 
$\bigcap_{i=1}^3 b_iw_i^{-1}X_{w_i}$ is transverse at $eB$ for generic $b_i\in B$. 
Since $T_{eB}(b_iw_i^{-1}X_{w_i})=b_i \cdot T_{eB}(w_i^{-1}X_{w_i})$,
the claim follows. 
\end{proof}

Fix generic $b_i \in B$ and let
\[I_i = \frac{T_{eB}(G/B)}{T_{eB}(w_i^{-1}X_{w_i})}.\]
Let $\Phi = \Phi^+ \sqcup \Phi^-$ denote the standard partition of the roots of $G$ into positives and negatives (so that the positive root spaces correspond to infinitesimal curves through the Borel). We will use the natural poset structure on $\Phi^+$, that is $\beta \leq \gamma$ if and only if $\gamma - \beta$ is a nonnegative integral combination of positive roots. 
By applying the Cartan involution to identify $\mathfrak{b}$ with $\mathfrak{b}_-$, we have
\[T_{eB}(G/B)= \bigoplus_{\beta\in \Phi^+}\mathfrak{g}_{\beta} \qquad \mbox{and} \qquad I_i = \bigoplus_{\beta\in \Phi^+ \cap w_i^{-1}\Phi^-}\mathfrak{g}_{\beta},\] where $\mathfrak{g}_\beta$ denotes the root space corresponding to the root $\beta$.

Recall that a \emph{filter} (or \emph{upset}) of a poset is a subset $\mathcal{J}$ such that if $x \in \mathcal{J}$ and $x \leq y$, then $y \in \mathcal{J}$. For $\mathcal{J}$ a filter in $\Phi^+$, let $J=\bigoplus_{\beta\in \mathcal{J}}\mathfrak{g}_{\beta}\subset T_{eB}(G/B)$. Since $\mathcal{J}$ is a filter, $b_i \cdot J=J$. 

Let $|w_i|_\mathcal{J}$ denote the number of $\beta \in \mathcal{J}$ such that $w_i\beta \in \Phi^-$.  Suppose $|w_1|_\mathcal{J} + |w_2|_\mathcal{J} + |w_3|_\mathcal{J} > |\mathcal{J}|$. Then 
\begin{align*}
\dim \, (b_1 \cdot I_1 + b_2 \cdot I_2 + b_3 \cdot I_3) / J  &\leq \dim \, b_1 \cdot I_1/J + b_2 \cdot I_2 / J + b_3 \cdot I_3 / J \\
&\leq \dim \, I_1/J + I_2 / J + I_3 / J \\
&\leq \sum_{i=1}^3 \dim I_i - \sum_{i=1}^3 \dim I_i \cap J\\
&< \dim T_{eB}(G/B) - \dim J \\
&=\dim T_{eB}(G/B)/ J ,
\end{align*}
so by Claim~\ref{claim:tangent_stuff} we must have $|w_1|_\mathcal{J} + |w_2|_\mathcal{J} + |w_3|_\mathcal{J} \le |\mathcal{J}|$, i.e., $|w_1|_\mathcal{J}+|w_2|_\mathcal{J} \le |w_3^{\vee}|_\mathcal{J}$.

Fix a simple root $\alpha$. Let $\mathcal{J}_{\alpha, k}$ be the set of roots of $\Phi^+$ that use $\alpha$ at least $k$ times in their expansion into simple roots. Each $\mathcal{J}_{\alpha, k}$ is a filter in $\Phi^+$.
By the above, we have $|w_1|_{\mathcal{J}_{\alpha, k}} + |w_2|_{\mathcal{J}_{\alpha, k}} \leq |w_3^\vee|_{\mathcal{J}_{\alpha, k}}$ for all $\alpha$ and all $k$.
Hence the degree of $t_\alpha$ in $F_{w_3^\vee}$ is at least the degree of $t_\alpha$ in $F_{w_1}F_{w_2}$, so $F_{w_1}F_{w_2}$ divides $F_{w_3^\vee}$. \qed

\section{The limit $s \to 0$}
\label{sec:star_product}
We write $\star_t$ for the limit of $\star_{t,s}$ as $s \to 0$. In this section, we give an independent and elementary proof that $\star_t$ is a well-defined associative and commutative product. We then interpret $\star_0$ geometrically.

Let $S_w(t)$ denote the limit of $F_w(t,s)$ as $s \to 0$. 
The product $\star_t$ on $H^\star(G/P)$ may then be defined by replacing each $F(t,s)$ by $S(t)$ in Equation~\ref{eq:product_def}.

For $G = \GL_n(\C)$, we will show that $\star_t$ coincides with the Belkale-Kumar product $\odot_t$, while for maximal parabolics in general type $\star_t$ coincides instead with the ordinary cup product. In general it is distinct from both.

\begin{theorem}\label{thm:star_t}
$\star_t$ is a well-defined commutative associative product.
\end{theorem}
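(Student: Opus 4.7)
The plan is to reduce the theorem to a divisibility statement and then extract that statement from Kleiman transversality on a single maximal-parabolic quotient. Commutativity is immediate, and associativity follows word-for-word from the calculation in Section~\ref{sec:main_proof} (every $F$ replaced by $S$) once well-definedness is known. Well-definedness in turn reduces to showing that $S_{w_1}(t) S_{w_2}(t)$ divides $S_{w_3^\vee}(t)$ whenever $c_{w_1,w_2}^{w_3^\vee}(G/P) \neq 0$, and, exactly as at the start of Section~\ref{sec:main_proof}, this lets us pass immediately to the case $P = B$.

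Next I would unpack $S_w(t)$. Taking $s_\alpha \to 0^+$ in the definition collapses $n_{\alpha\beta}^{s_\alpha}$ to $1$ when $n_{\alpha\beta} > 0$ and to $0$ when $n_{\alpha\beta} = 0$, so
\[ S_w(t) = \prod_{\beta \in \Phi^+ \cap w^{-1}\Phi^-} \; \prod_{\alpha \,:\, n_{\alpha\beta}>0} t_\alpha. \]
Hence the degree of $t_\alpha$ in $S_w(t)$ counts the inversions of $w$ whose simple-root support contains $\alpha$. Letting $P_\alpha \supset B$ denote the maximal parabolic whose Levi omits exactly $\alpha$, this count equals the Coxeter length $l(w^{P_\alpha})$ of the minimal representative of $wW_{P_\alpha}$. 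Since we have reduced to $P = B$, we have $w_3^\vee = w_0 w_3$, and a short inversion-set computation shows $l((w_3^\vee)^{P_\alpha}) = \dim(G/P_\alpha) - l(w_3^{P_\alpha})$. Thus the required divisibility is equivalent to the inequality
\[ l(w_1^{P_\alpha}) + l(w_2^{P_\alpha}) + l(w_3^{P_\alpha}) \leq \dim(G/P_\alpha) \]
for every simple root $\alpha$.

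To close the argument I would invoke Kleiman's transversality theorem twice. Nonvanishing of $c_{w_1,w_2}^{w_3^\vee}(G/B)$ implies that for generic $g_i \in G$ the intersection $\bigcap_{i=1}^3 g_i X_{w_i}$ in $G/B$ is nonempty (indeed zero-dimensional). The $G$-equivariant projection $\pi_\alpha : G/B \to G/P_\alpha$ sends $X_{w_i}$ onto $X_{w_i^{P_\alpha}}$, so the image of this intersection exhibits $\bigcap_{i=1}^3 g_i X_{w_i^{P_\alpha}}$ as nonempty. Since $G$ acts transitively on $G/P_\alpha$, Kleiman applied downstairs forces that latter intersection to have the expected dimension $\dim(G/P_\alpha) - \sum_i l(w_i^{P_\alpha})$, which must therefore be nonnegative.

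The only real hurdle I anticipate is bookkeeping: cleanly identifying the $t_\alpha$-degree of $S_w(t)$ with $l(w^{P_\alpha})$ and correctly tracking the Poincar\'e dual $w_3^\vee$ through its projection to $W^{P_\alpha}$. Once that is settled, Kleiman on $G/P_\alpha$ does all the geometric work in one line, giving a proof that avoids the tangent-space and filter arguments of Section~\ref{sec:main_proof} entirely, and is thereby genuinely independent of Theorem~\ref{thm:main}.
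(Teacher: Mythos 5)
Your proposal is correct and takes essentially the same route as the paper's proof: project to the maximal-parabolic quotients $G/P_\alpha$, observe that the $t_\alpha$-degree of $S_w(t)$ equals the length of the projection of $w$ to $W^{P_\alpha}$, and deduce the needed codimension inequality from the fact that a nonempty generic intersection upstairs pushes forward to a nonempty generic intersection downstairs. The only cosmetic difference is that you first reduce to $P=B$ before projecting (which forces the Poincar\'e-duality bookkeeping you flag), whereas the paper projects directly from $G/P$ to $G/P_\alpha$ and phrases the inequality as $l(u_\alpha)+l(v_\alpha)\le l(w_\alpha)$, sidestepping that step.
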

\begin{proof}
Commutativity is clear, while associativity is proved exactly as in the proof of Theorem~\ref{thm:main}.

We now prove $\star_t$ is well-defined. Let $P_\alpha$ denote the maximal parabolic subgroup of $G$ associated to a simple root $\alpha$ of $P$. Define a projection $\pi_\alpha:G/P \rightarrow G/P_\alpha$ by $\pi_\alpha(gP) = gP_\alpha$.
Observe that
$\pi_\alpha$ is $G$-equivariant. 

For $w \in W^P$, let $w_\alpha$ denote the minimal length coset representative of $wW_{P_\alpha}$. Then $\pi_\alpha$ maps points of $X_w$ to $X_{w_\alpha}$.

\begin{claim}\label{claim:region_count}
If $c_{u,v}^w(G/P) \neq 0$, then for each $\alpha$, $l(u_\alpha)+l(v_\alpha) \leq l(w_\alpha)$.
\end{claim}
\begin{proof}
If $l(u_\alpha)+l(v_\alpha)>l(w_\alpha)$, then for dimension reasons, generic translates of $X_{u_\alpha}$, $X_{v_\alpha}$, $X_{(w^{\vee})_\alpha}$ have empty intersection in $G/P_\alpha$.

Since $c_{u,v}^w \neq 0$, for generic $(g_1, g_2, g_3) \in G^3$, there is a point $gP \in g_1 X_u \cap g_2 X_v \cap g_3 X_{w^{\vee}} \subseteq G/P$. 
This implies $\pi_\alpha(gP) \in g_1 X_{u_\alpha} \cap g_2 X_{v_\alpha} \cap g_3 X_{(w^{\vee})_\alpha} \subseteq G/P_\alpha$. In particular this latter intersection is nonempty, so $l(u_\alpha)+l(v_\alpha)\le l(w_\alpha)$.
\end{proof}

The degree of $t_\alpha$ in $S_w(t)$ is exactly the number of positive roots $\beta$ inverted by $w$ that use $\alpha$ in their simple root expansion. This number is $l(w_\alpha)$. Therefore, the degree of $t_\alpha$ in $\frac{S_w(t)}{S_u(t)S_v(t)}$ is $l(w_\alpha)-l(u_\alpha)-l(v_\alpha)$.

Let $u,v,w\in W^P$ with $c_{u,v}^w(G/P) \neq 0$. Then by Claim~\ref{claim:region_count}, $l(w_\alpha)-l(u_\alpha)-l(v_\alpha)\ge 0$ for all $\alpha$, and so $S_u(t)S_v(t)$ divides $S_w(t)$ as desired.
\end{proof}

As a corollary, we obtain the following special case of Corollary~\ref{cor:bk}.
\begin{corollary}
For $G = \GL_n(\C)$, the Belkale-Kumar product $\odot_t$ is well-defined. 
\end{corollary}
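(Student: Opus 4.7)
The plan is to derive the corollary from Theorem~\ref{thm:star_t} by identifying, for $G = \GL_n(\C)$, the limit product $\star_t$ with the Belkale-Kumar product $\odot_t = \star_{t,1}$. Once this identification is made, the well-definedness of $\odot_t$ is immediate from the well-definedness of $\star_t$.

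The key observation is a feature of the type $A$ root system: every positive root of $\GL_n(\C)$ has the form $\alpha_i + \alpha_{i+1} + \cdots + \alpha_j$, so each coefficient $n_{\alpha\beta}$ in the simple root expansion lies in $\{0, 1\}$. Consequently, for any $s_\alpha > 0$, the exponent $n_{\alpha\beta}^{s_\alpha}$ still equals $n_{\alpha\beta}$ itself (since $0^{s_\alpha} = 0$ and $1^{s_\alpha} = 1$), and hence the monomial $t^\beta = \prod_\alpha t_\alpha^{n_{\alpha\beta}^{s_\alpha}}$ does not depend on $s$. Multiplying over the positive roots inverted by $w$ gives $F_w(t,s) = F_w(t,1)$ for every $s$, and in particular $S_w(t) = \lim_{s \to 0} F_w(t,s) = F_w(t,1)$.

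Substituting this identity into Equation~\ref{eq:product_def} shows $\star_t = \star_{t,1} = \odot_t$ on $H^\star(G/P)$ when $G = \GL_n(\C)$, so Theorem~\ref{thm:star_t} translates directly into well-definedness of $\odot_t$. There is essentially no obstacle in the argument: the entire content is the elementary type $A$ feature that the simple root coefficients are all $0$ or $1$, which collapses the $s$-deformation in this case. The only point to be careful about is evaluating $n_{\alpha\beta}^{s_\alpha}$ when $n_{\alpha\beta} = 0$, which is handled by reading $F_w(t,s)$ as a rational function of $t$ whose $t_\alpha$-exponent sums only over those $\beta$ with $n_{\alpha\beta} \geq 1$.
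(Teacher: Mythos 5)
Your proposal is correct and takes essentially the same approach as the paper: both rely on the type $A$ fact that $n_{\alpha\beta} \in \{0,1\}$ for all positive roots $\beta$, which makes $F_w(t,s)$ independent of $s$, so that $\star_t = \star_{t,1} = \odot_t$ and the corollary follows from Theorem~\ref{thm:star_t}. Your write-up simply spells out the elementary exponent calculation ($0^{s_\alpha}=0$, $1^{s_\alpha}=1$) that the paper leaves implicit.
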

\begin{proof}
For $\GL_n(\C)$, we always have $n_{\alpha\beta} \leq 1$, so $F(t,s) = S(t)$ and $\odot_t = \star_{t,s}$.
\end{proof}

For any $Q \supset P$ and $w \in W^P$, there is a unique parabolic decomposition $w = w'w''$, where $w' \in W^Q$ and $w'' \in W^P \cap W_Q$. 
Suppose $c_{u,v}^{w^\vee} \neq 0$. We say that the triple $(u, v, w) \in (W^P)^3$ is \emph{$Q$-factoring} if $g_1 X_{u'} \cap g_2 X_{v'} \cap  g_3 X_{{w'}}$ is a finite (nonempty) set of points for generic $g_i \in G$, or equivalently if $g_1 X_{u''} \cap g_2 X_{v''} \cap  g_3 X_{w''}$ is generically a finite (nonempty) set of points.

Let $a_{u, v}^w :=  \frac{S_w(0)}{S_u(0)S_v(0)} c_{u,v}^w$ denote the structure constants of the ring $(H^\star(G/P), \star_0)$. 
\begin{proposition}
\[a_{u,v}^w = \begin{cases}
              c_{u, v}^w & \mbox{if $(u,v,w^\vee)$ is $Q$-factoring for every $Q \supset P$,}\\
              0 & \mbox{otherwise.}
              \end{cases}\]
\end{proposition}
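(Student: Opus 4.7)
The plan is to express $a_{u,v}^w$ explicitly via $S_w(t)$, reduce the vanishing criterion to a length identity at each simple root, and show that identity is equivalent to $Q$-factoring for every $Q \supset P$. From the degree analysis in the proof of Theorem~\ref{thm:star_t}, $S_w(t) = \prod_\alpha t_\alpha^{l(w_\alpha)}$ (product over simple roots $\alpha$ associated to $P$), so
\[
a_{u,v}^w = c_{u,v}^w \cdot \prod_\alpha 0^{\,l(w_\alpha) - l(u_\alpha) - l(v_\alpha)}.
\]
If $c_{u,v}^w = 0$, both sides of the proposition vanish (the definition of $Q$-factoring requires $c_{u,v}^w \neq 0$). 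Otherwise Claim~\ref{claim:region_count} makes every exponent nonnegative, so $a_{u,v}^w = c_{u,v}^w$ precisely when $l(w_\alpha) = l(u_\alpha) + l(v_\alpha)$ for every $\alpha$ (call this $(\star)$), and $a_{u,v}^w = 0$ otherwise. It remains, assuming $c_{u,v}^w \neq 0$, to prove $(\star)$ is equivalent to $(u, v, w^\vee)$ being $Q$-factoring for every $Q \supset P$.

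For $Q \supset P$, write $S_Q$ for the simple roots associated to $Q$ (a subset of those associated to $P$) and $w = w'w''$ for the parabolic decomposition. The projection argument of Claim~\ref{claim:region_count} applied to $\pi: G/P \to G/Q$, combined with Kleiman transversality and $l(w') + l((w^\vee)') = \dim G/Q$, shows that $(u, v, w^\vee)$ is $Q$-factoring iff $l(u') + l(v') = l(w')$. A standard computation identifies $l(w')$ with the number of inversions of $w$ using at least one simple root in $S_Q$, so $l(w') = |w|_{\mathcal{J}_Q}$ with $\mathcal{J}_Q := \bigcup_{\alpha \in S_Q} \mathcal{J}_{\alpha, 1}$ in the notation of the proof of Theorem~\ref{thm:main}. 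Specializing to $Q = P_\alpha$ gives $\mathcal{J}_Q = \mathcal{J}_{\alpha, 1}$ and recovers $(\star)$, establishing the easy direction.

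The main obstacle is the converse. Given $(\star)$, I would prove by induction on $|T|$ the intermediate equality
\[
|u|_{\mathcal{J}_T^\cap} + |v|_{\mathcal{J}_T^\cap} = |w|_{\mathcal{J}_T^\cap}
\]
for every subset $T$ of simple roots associated to $P$, where $\mathcal{J}_T^\cap := \bigcap_{\alpha \in T} \mathcal{J}_{\alpha, 1}$ is a filter. The cases $|T| \in \{0, 1\}$ reduce to $l(u) + l(v) = l(w)$ and $(\star)$ respectively. For the inductive step with $|T| \ge 2$, pick any $\alpha_0 \in T$, set $T' = T \setminus \{\alpha_0\}$, and apply the filter inequality from the proof of Theorem~\ref{thm:main}, namely $|u|_\mathcal{J} + |v|_\mathcal{J} \le |w|_\mathcal{J}$, to the filter $\mathcal{J}_{T'}^\cap \cup \mathcal{J}_{\alpha_0, 1}$. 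Expanding via $|x|_{A \cup B} = |x|_A + |x|_B - |x|_{A \cap B}$ (noting $A \cap B = \mathcal{J}_T^\cap$) and substituting the inductive hypothesis for $T'$ together with $(\star)$ for $\alpha_0$, the outer terms cancel to produce the reverse bound $|u|_{\mathcal{J}_T^\cap} + |v|_{\mathcal{J}_T^\cap} \ge |w|_{\mathcal{J}_T^\cap}$; combined with the filter inequality applied directly to $\mathcal{J}_T^\cap$, this forces equality. A final inclusion-exclusion on $\mathcal{J}_Q = \bigcup_{\alpha \in S_Q} \mathcal{J}_{\alpha, 1}$ upgrades the intersection-filter equalities to $|u|_{\mathcal{J}_Q} + |v|_{\mathcal{J}_Q} = |w|_{\mathcal{J}_Q}$ for every $Q \supset P$, completing the proof.
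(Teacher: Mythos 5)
Your proof is correct, and its core reduction is the same as the paper's: since $\tfrac{S_w(0)}{S_u(0)S_v(0)}=\prod_\alpha 0^{\,l(w_\alpha)-l(u_\alpha)-l(v_\alpha)}$, and each exponent is nonnegative by Claim~\ref{claim:region_count}, the coefficient $a_{u,v}^w$ is either $0$ or $c_{u,v}^w$, with the former happening exactly when some $l(w_\alpha)>l(u_\alpha)+l(v_\alpha)$, i.e.\ exactly when $(u,v,w^\vee)$ fails to be $P_\alpha$-factoring for some $\alpha$. Where you diverge is in treating seriously the passage from maximal parabolics to arbitrary $Q \supset P$. The paper compresses this into the sentence ``We may assume that $Q$ is a maximal parabolic $P_\alpha$,'' which is true but not immediate: it amounts to showing that if $l(w_\alpha)=l(u_\alpha)+l(v_\alpha)$ for every $\alpha$ then $l(w')=l(u')+l(v')$ for the parabolic decomposition with respect to every $Q$. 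You supply an actual argument for this, by re-importing the filter inequality $|u|_{\mathcal J}+|v|_{\mathcal J}\le|w|_{\mathcal J}$ from the proof of Theorem~\ref{thm:main}, running an induction on intersections $\mathcal J_T^\cap=\bigcap_{\alpha\in T}\mathcal J_{\alpha,1}$ (using that unions and intersections of filters are filters and the identity $|x|_{A\cup B}=|x|_A+|x|_B-|x|_{A\cap B}$), and then applying inclusion-exclusion to $\mathcal J_Q=\bigcup_{\alpha\in S_Q}\mathcal J_{\alpha,1}$. All the intermediate identifications you invoke --- $(w^\vee)'=(w')^\vee$, $l(w')=|w|_{\mathcal J_Q}$, and the Kleiman transversality characterization of $Q$-factoring as the length equality $l(u')+l(v')=l(w')$ --- check out. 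So the proposal is a correct proof that is slightly longer than the paper's precisely because it justifies a reduction the paper states without argument; the cost is that Section~\ref{sec:star_product} is no longer self-contained, since you need the filter machinery from Section~\ref{sec:main_proof}.
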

\begin{proof}
This is trivial if $c_{u, v}^w=0$, so assume it is positive. Suppose $(u, v, w^\vee)$ is not $Q$-factoring for some $Q \supset P$. We may assume that $Q$ is a maximal parabolic $P_\alpha$ for some simple root $\alpha$. Then $l({w'}) > l(u') + l(v')$. Therefore $t_\alpha$ has positive degree in $\frac{S_w(t)}{S_u(t)S_v(t)}$, whence $\frac{S_w(0)}{S_u(0)S_v(0)} = 0$.
\end{proof}

\begin{remark}
Triples $(u,v,w)$ that are $P_\alpha$-factoring for some fixed collection of maximal parabolics $P_\alpha$ may be picked out by taking the limit of $\star_{t,s}$ as the corresponding $s_\alpha \to 0$, and then setting $t=0$ and other $s_\alpha = 1$.  
\end{remark}
              
\begin{remark}
By \cite[Theorem~1.1]{Richmond}, the numbers $a_{u,v}^w$ factor as  $c_{u', v'}^{w'} c_{u'', v''}^{w''}$. Iterating this factorization for every maximal $P_\alpha \supset P$, we obtain a factorization of $a_{u,v}^w$ as a product of Schubert structure constants $c_{x,y}^z$ on maximal parabolic quotients $G/P_\alpha$.

Richmond \cite{Richmond} also notes that $(u, v, w)$ is $Q$-factoring for each $Q \supset P$ when $(u, v, w)$ is Levi-movable in the sense of \cite[Definition~4]{belkale.kumar}. Therefore $\star_0$ may be thought of as `less-degenerate' than $\odot_0$, since a generally smaller collection of Schubert structure constants are set to $0$.
\end{remark}

\begin{example}
Let $G = \SO_9(\C)$ and $P$ be the parabolic associated to the second and fourth simple roots (where the fourth is the short root). Of the 8271 nonzero Schubert structure constants for $H^\star(G/P)$, 807 are nonzero for the deformation $\star_0$. Of these only 597 represent Levi-movable triples and so are nonzero in the Belkale-Kumar deformation $\odot_0$. An example of one of the 210 nonzero $a_{u,v}^w$ coefficients not coming from a Levi-movable triple is $a_{1324, 1\overline{2}34}^{3\overline{2}14}=1$. (Here we identify $W$ with the group of signed permutations on four letters).

Of the 193116 nonzero Schubert structure constants for $H^\star(G/B)$, only 2439 are nonzero for $\star_0$. Of these, 2103 arise from Levi-movable triples.
\qed
\end{example}

\begin{example}
Let $G = \Sp_{12}(\C)$ and $P$ be the parabolic associated to the fourth simple root (where the sixth is the long root). There are $99105$ nonzero Schubert structure constants for $H^\star(G/P)$. Since $P$ is maximal, these are all nonzero for the deformation $\star_0$. However only $7962$ are nonzero for the Belkale-Kumar deformation $\odot_0$. \qed
\end{example}

\section*{Acknowledgments}
OP was supported by an Illinois Distinguished Fellowship, an NSF Graduate Research Fellowship, and NSF MCTP
grant DMS 0838434.
DS was supported by a Golub Research Assistantship and a University of Illinois Dissertation Completion Fellowship.

This project was inspired by a talk of Sam Evens at the University of Illinois at Urbana-Champaign in April 2014. We are grateful for very helpful conversations with Sam Evens, William Haboush, Edward Richmond, and Alexander Yong.


\begin{thebibliography}{9999999999}
\bibitem[BeKu06]{belkale.kumar} P.~Belkale and S.~Kumar, \emph{Eigenvalue problem and a new product in cohomology of flag varieties}, Invent. Math. {\bf 166.1} (2006), 185--228.
\bibitem[BeKu10]{BelkaleKumar10} P.~Belkale and S.~Kumar, \emph{Eigencone, saturation and Horn problems for symplectic and odd orthogonal groups}, J. Algebraic Geom. {\bf 19} (2010), 199--242.
\bibitem[BeKuRe12]{BelkaleKumarRessayre} P.~Belkale, S.~Kumar and N.~Ressayre, \emph{A generalization of Fulton's conjecture for arbitrary groups}, Math. Ann. {\bf 354.2} (2012), 401--425.
\bibitem[EvGr13]{evens.graham} S.~Evens and W.~Graham, \emph{The Belkale-Kumar cup product and relative Lie algebra cohomology}, Int. Math. Res. Not. (2013), 1901--1933.
\bibitem[Kl74]{kleiman} S. Kleiman, \emph{The transversality of a general translate}, Compos. Math.
{\bf 28} (1974), 287--297.
\bibitem[Pu06]{Purbhoo} K.~Purbhoo, \emph{Vanishing and non-vanishing criteria in Schubert calculus}, Int. Math. Res. Not. (2006), 1--38.
\bibitem[Re10]{ressayre} N.~Ressayre, \emph{Geometric invariant theory and the generalized eigenvalue problem}, Invent.\ Math. {\bf 180.2} (2010), 389--441.
\bibitem[Re12]{Ressayre} N.~Ressayre, \emph{A cohomology-free description of eigencones in types A, B, and C}, Int. Math. Res. Not. (2012), 4966--5005.
\bibitem[Ri08]{richmond:thesis} E.~Richmond,  \emph{Recursive structures in the cohomology of flag
varieties}, PhD thesis, University of North Carolina at Chapel Hill, 2008.
\bibitem[Ri12]{Richmond} E.~Richmond, \emph{A multiplicative formula for structure constants in the cohomology of flag varieties}, Michigan Math. J. {\bf 61.1} (2012), 3--17.
\end{thebibliography}
\end{document}